\pgfplotsset{width=6cm,compat=newest}
\newtheorem{theorem}{Theorem}
\newtheorem{lemma}[theorem]{Lemma}
\begin{document}
\onehalfspace

\title{Vertex degrees close to the average degree}
\author{
Johannes Pardey\and Dieter Rautenbach}
\date{}

\maketitle
\vspace{-10mm}
\begin{center}
{\small 
Institute of Optimization and Operations Research, Ulm University, Ulm, Germany\\
\texttt{$\{$johannes.pardey,dieter.rautenbach$\}$@uni-ulm.de}
}
\end{center}

\begin{abstract}
Let $G$ be a finite, simple, and undirected graph 
of order $n$ and average degree $d$.
Up to terms of smaller order, we characterize the minimal intervals $I$ 
containing $d$ that are guaranteed to contain some vertex degree.
In particular, for $d_+\in \left(\sqrt{dn},n-1\right]$, 
we show the existence of a vertex in $G$ of degree between
$d_+-\left(\frac{(d_+-d)n}{n-d_++\sqrt{d_+^2-dn}}\right)$ and $d_+$.
\\[3mm]
{\bf Keywords:} Average degree; degree sequence
\end{abstract}

\section{Introduction}

An obvious observation concerning finite, simple, and undirected graphs is
that every such graph $G$ with $n$ vertices and average degree $d$ 
has a vertex of degree at most $d$
as well as a vertex of degree at least $d$,
that is, there are vertex degrees in the two intervals $[0,d]$ and $[d,n-1]$.
In the present note we study minimal intervals $I$ containing $d$
for which every such graph $G$ 
necessarily contains some vertex $u$ whose degree $d_G(u)$ lies in $I$.
Surely, which intervals have this property is implicit 
in characterizations of degree sequences 
such as the well-known Erd\H{o}s-Gallai characterization \cite{erga,hasc}.
Our goal here is to specify such intervals explicitly,
and our motivation originally came from embedding problems
that rely on special cases of the results presented here \cite{mopara,para}.

Our first result specifies a natural inverval of length about half the order 
around the average degree that is guaranteed to contain some vertex degree.
Some of the estimates in its proof, 
cf.~(\ref{e6}) and (\ref{e7}) below,
correspond to inequalities from the Erd\H{o}s-Gallai characterization \cite{erga}.

\begin{theorem}\label{theorem1}
If $G$ is a graph with $n$ vertices and $m$ edges, 
then there is a vertex $u$ in $G$ with
\begin{align}\label{e1}
d-\frac{n-2}{2(n-1)}d\leq d_G(u)\leq d+\frac{n-2}{2(n-1)}\overline{d},
\end{align}
where $d=\frac{2m}{n}$ is the average degree of $G$ 
and $\overline{d}=n-1-d$ is the average degree of 
the complement $\overline{G}$ of $G$.

Furthermore, if $G$ has no vertex $u$ with 
$d-\frac{n-2}{2(n-1)}d<d_G(u)<d+\frac{n-2}{2(n-1)}\overline{d}$, then 
\begin{itemize}
\item the vertex set of $G$ is the disjoint union of two sets $V_+$ and $V_-$,
\item $V_+$ is a clique of order $\frac{dn}{n-1}$,
\item $V_-$ is an independent set of order $\frac{\overline{d}n}{n-1}$,
\item every vertex in $V_+$ is adjacent to exactly half the vertices in $V_-$, and
\item every vertex in $V_-$ is adjacent to exactly half the vertices in $V_+$.
\end{itemize}
\end{theorem}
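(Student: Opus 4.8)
The plan is to derive both parts of the theorem from a single edge-counting estimate together with its complementary (complement-graph) version. Write $L=d-\frac{n-2}{2(n-1)}d$ and $U=d+\frac{n-2}{2(n-1)}\overline{d}$ for the two endpoints in (\ref{e1}), and record the identities $U-L=\frac{n-2}{2}$ and $L=\frac{dn}{2(n-1)}$, which I use throughout. Assume first that $G$ has \emph{no} vertex $u$ with $L<d_G(u)<U$. Since $L<U$ for $n\ge 3$, the vertex set splits as $V=V_+\cup V_-$ with $V_+=\{u:d_G(u)\ge U\}$ and $V_-=\{u:d_G(u)\le L\}$; put $p=|V_+|$ and $q=|V_-|=n-p$. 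As every degree in $V_+$ is at least every degree in $V_-$, the set $V_+$ collects the $p$ largest degrees, so an Erd\H{o}s--Gallai estimate is available at this index.

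The first step is the inequality
\begin{equation}\label{e6}
\sum_{u\in V_+}d_G(u)\le p(p-1)+\sum_{u\in V_-}d_G(u),
\end{equation}
obtained by bounding the edges inside $V_+$ by $\binom{p}{2}$ and the edges of the cut $(V_+,V_-)$ by $\sum_{u\in V_-}d_G(u)$. Feeding $\sum_{u\in V_+}d_G(u)\ge pU$ and $\sum_{u\in V}d_G(u)=dn$ into (\ref{e6}) yields $2pU\le dn+p(p-1)$, which, after substituting the two identities and writing $\alpha=\frac{dn}{n-1}$, factors as
\begin{equation}\label{e7}
(p-\alpha)\left(p-(n-1)\right)\ge 0.
\end{equation}
Applying the identical argument to $\overline{G}$, whose high-degree class is $V_-$ (the hypothesis being complement-symmetric), produces the mirror inequality $(q-\overline{\alpha})(q-(n-1))\ge 0$ with $\overline{\alpha}=\frac{\overline{d}n}{n-1}$.

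Next I would combine the two factored inequalities. Since $p\le n-1$, (\ref{e7}) is expected to force $p\le\alpha$, and symmetrically $q\le\overline{\alpha}$; because $p+q=n=\alpha+\overline{\alpha}$, both hold with equality, giving $p=\alpha$, $q=\overline{\alpha}$, and \emph{every} inequality used above is tight. Tightness of (\ref{e6}) forces $2e(V_+)=p(p-1)$ and $e(V_-)=0$, so $V_+$ is a clique and $V_-$ is independent; tightness of $\sum_{u\in V_+}d_G(u)\ge pU$ forces each vertex of $V_+$ to have degree exactly $U$, and likewise each vertex of $V_-$ degree exactly $L$. Under $p=\alpha,\ q=\overline{\alpha}$ one checks $U=(p-1)+\frac{q}{2}$ and $L=\frac{p}{2}$, so every clique vertex has $U-(p-1)=\frac{q}{2}$ neighbours in $V_-$ and every vertex of the independent set $V_-$ has all its $L=\frac{p}{2}$ neighbours in $V_+$, which is exactly the asserted half-adjacency. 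For (\ref{e1}) I would rerun the same computation under the stronger hypothesis that no vertex lies in the \emph{closed} interval $[L,U]$: the inequalities become strict and force $p<\alpha$ and $q<\overline{\alpha}$, and summing contradicts $p+q=n$, so some vertex always has degree in $[L,U]$.

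The main obstacle I anticipate is the passage from (\ref{e7}) to $p=\alpha$. The factorisation degenerates precisely when $p=n-1$ (equivalently $q=1$): there (\ref{e7}) holds vacuously and gives no bound, and a separate, careful argument is needed to show that such an extreme split either reproduces the stated structure or cannot arise, together with the similarly delicate bookkeeping of whether $\alpha$ and $\overline{\alpha}$ are attained as genuine integers and whether $V_+$ or $V_-$ may be empty (the complete and empty graphs being trivial cases). Controlling these boundary configurations, rather than the clean generic computation, is where the real work of the characterisation lies.
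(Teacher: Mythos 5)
Your proof of the existence statement (\ref{e1}) is correct and is in substance the paper's own argument: the paper bounds the degree sum over $V_+$ by $p(p-1)+\sum_{u\in V_-}d_G(u)$ exactly as you do, and the two factored quadratics it works with, $(p-1)(p-\alpha)\ge 0$ and $(p-\alpha)\bigl(p-(n-1)\bigr)\ge 0$, are the same pair you obtain --- you just derive them ``crossed'' (the paper gets the first from $G$ by bounding the degrees in $V_-$ and the second from $\overline{G}$, while you get the second from $G$ by bounding the degrees in $V_+$ and the first from $\overline{G}$). Your strict version under the hypothesis that no degree lies in $[L,U]$, forcing $p<\alpha$ and $q<\overline{\alpha}$ and contradicting $p+q=n=\alpha+\overline{\alpha}$, is a clean and valid way to finish that half, modulo the routine non-emptiness and trivial-case bookkeeping you mention.

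The gap you flag in the characterization, however, is not a technicality that more care could repair: the degenerate case $|V_+|=1$ or $|V_-|=1$ really occurs, and the stated conclusion is \emph{false} there. Take $G=K_{1,n-1}$ with $n\ge 3$: then $d=\frac{2(n-1)}{n}$, the endpoints are $L=1$ and $U=\frac{n}{2}$, and the degrees are $n-1$ and $1$, so no degree lies strictly between $L$ and $U$; here $p=1$, $q=n-1$, $\alpha=2$, $\overline{\alpha}=n-2$, so $q>\overline{\alpha}$ and your intended step ``$q\le\overline{\alpha}$'' fails exactly as you feared (your mirror inequality $(q-\overline{\alpha})\bigl(q-(n-1)\bigr)\ge 0$ is vacuous at $q=n-1$). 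Moreover the star does not admit the claimed partition: for $n=4$ the conclusion forces a clique of order $2$, an independent set of order $2$, and the half-adjacency conditions, i.e.\ $G\cong P_4$, whereas $G=K_{1,3}$. So the ``Furthermore'' part is false as stated for stars (and, by complementation, for $K_{n-1}$ plus an isolated vertex). Notably, the paper's own proof slips at precisely this point: in the case ``$V_+$ a clique, $V_-$ independent, all of $V_-$ of degree exactly $L$, some vertex of $V_+$ of degree $>U$'' it derives $x<\frac{d}{n-1}$ and declares this a contradiction, but $x=\frac{1}{n}<\frac{d}{n-1}$ is compatible with every constraint derived before, and $K_{1,n-1}$ realizes exactly this configuration. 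So your instinct about where the real difficulty sits is vindicated in the strongest sense: the existence half of your proposal goes through, but the characterization cannot be completed --- by you or by the paper's route --- without adding these exceptional graphs to the statement.
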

Theorem \ref{theorem1} gives a rather precise answer in a specific setting,
where the interval depends on $n$ and $d$;
allowing for the characterization of the corresponding extremal graphs.
Our second result applies in a more general setting;
it gives the precise answer up to terms of smaller order.
 
\begin{theorem}\label{theorem2}
If $G$ is a graph of order $n$ and average degree $d$ with $0<d<n-1$,
and $d_+\in \left(\sqrt{dn},n-1\right]$, 
then there is a vertex $u$ in $G$ with
\begin{align*}
d_+-\frac{(d_+-d)n}{n-d_++\sqrt{d_+^2-dn}}
\leq d_G(u)\leq d_+.
\end{align*}
\end{theorem}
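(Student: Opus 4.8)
The plan is to argue by contradiction. Suppose no vertex of $G$ has degree in the closed interval $[d_-,d_+]$, where $d_-$ is the claimed lower endpoint, and partition the vertices as $V=V_+\cup V_-$ with $V_+=\{v:d_G(v)>d_+\}$ and $V_-=\{v:d_G(v)<d_-\}$; write $p=|V_+|$. First I would record two facts that make the partition meaningful: a short computation of $d_--d$ gives $d_-<d<d_+$ (using $d_+>\sqrt{dn}\ge d$), so $V_+=\emptyset$ would force the average degree below $d_-<d$ and $V_-=\emptyset$ would force it above $d_+>d$, both impossible; hence $1\le p\le n-1$. I write $S_+=\sum_{v\in V_+}d_G(v)$ and $S_-=\sum_{v\in V_-}d_G(v)$, so that $S_++S_-=2m=dn$, with $S_+>pd_+$ and $S_-<(n-p)d_-$.

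The first main estimate bounds $p$ from above and is where $\sqrt{d_+^2-dn}$ originates. Applying the Erd\H{o}s--Gallai inequality to the $p$ largest degrees (equivalently, the degree sum of $V_+$ is at most $p(p-1)$ from edges inside $V_+$ plus the degrees available outside) gives $S_+\le p(p-1)+S_-$. Together with $S_++S_-=dn$ and $S_+>pd_+$ this yields $p^2-(2d_++1)p+dn>0$. Since $S_+\le dn$ forces $p<dn/d_+<\sqrt{dn}$, the larger root of this quadratic is excluded, so $p$ lies below its smaller root, which in turn is below $p_0:=d_+-\sqrt{d_+^2-dn}$.

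The second estimate controls the $V_+$--$V_-$ edges. Each vertex of $V_+$ has more than $d_+$ neighbours and at most $p-1$ inside $V_+$, so more than $p(d_+-p+1)$ edges leave $V_+$; meanwhile every vertex of $V_-$ has fewer than $d_-$ neighbours, so fewer than $(n-p)d_-$ edges meet $V_-$. Hence $(n-p)d_->p(d_+-p+1)$. A complementary lower bound on $p$ comes from the degree sum itself: from $S_+\le p(n-1)$ and $S_-<(n-p)d_-$ one gets $dn<p(n-1)+(n-p)d_-$, preventing $p$ from being too small. The endgame is to show that the upper bound $p<p_0$, the cross-edge inequality, and this degree-sum inequality admit no common value of $p$ for the stated $d_-$; the relation that makes everything close is the identity $(d_+-d)n=(d_+-d_-)(n-p_0)$, which rearranges to exactly $d_-=d_+-\tfrac{(d_+-d)n}{n-d_++\sqrt{d_+^2-dn}}$.

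I expect the main obstacle to be precisely this final reconciliation: eliminating $\sqrt{d_+^2-dn}$ between the quadratic bound on $p$ and the two linear relations, and verifying that no admissible $p$ survives once the closed form of $d_-$ is inserted. Care is needed because the small-$p$ range is eliminated by the degree-sum inequality while the remaining range is eliminated by the cross-edge inequality, and the two only become jointly impossible after the square root is cleared. The algebra is routine once it is organised around the quantity $p_0=d_+-\sqrt{d_+^2-dn}$ and the identity above, but coaxing the clean closed form out of these competing bounds is the crux.
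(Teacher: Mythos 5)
Your setup is sound, and your first estimate is correct: the partition into $V_+$ and $V_-$, their non-emptiness, the count $S_+\le p(p-1)+S_-$, and the conclusion $p<p_0:=d_+-\sqrt{d_+^2-dn}$ all check out, and this is the same Erd\H{o}s--Gallai-type counting that the paper packages into its optimization problem $(P)$. The gap is in the endgame, and it is not merely an unfinished computation: the three inequalities you plan to play off against one another, namely (a) $p<p_0$, (b) $(n-p)d_->p(d_+-p+1)$, and (c) $dn<p(n-1)+(n-p)d_-$, are \emph{jointly satisfiable} for the stated value of $d_-$, so no algebra whatsoever can extract a contradiction from them. Concretely, take $n=100$, $d=25$, $d_+=60$, so $\sqrt{d_+^2-dn}=\sqrt{1100}\approx 33.17$, $p_0\approx 26.83$, $d_-\approx 12.16$; then $p=15$ satisfies all three: $15<26.83$, and $85\cdot 12.16\approx 1034>15\cdot 46=690$, and $2500<15\cdot 99+1034\approx 2519$. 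The value $p=15$ is of course impossible for an actual graph, but what excludes it is an inequality you never derive: from $dn=S_++S_-\le \big(p(p-1)+e(V_+,V_-)\big)+S_-\le p(p-1)+2S_-$ one gets $dn<p(p-1)+2(n-p)d_-$, which fails at $p=15$ since $210+2\cdot 1034\approx 2278<2500$. Your (b) cannot substitute for this: in the entire regime where your quadratic bound holds, i.e.\ $2pd_+-p(p-1)<dn$, one has $p(d_+-p+1)<\frac{1}{2}\big(dn-p(p-1)\big)$, so (b) is strictly weaker than the inequality that is actually needed.

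The structural reason for the failure is that you froze the average degree of $V_+$ at $d_+$ when lower-bounding the cross edges. Writing $\overline{d}_+=S_+/p>d_+$ and $\overline{d}_-=S_-/(n-p)<d_-$, the two relevant relations are the degree sum $(n-p)\overline{d}_-+p\overline{d}_+=dn$ and the cross-edge bound $(n-p)\overline{d}_-\ge p\overline{d}_+-p(p-1)$; the slack $\overline{d}_+-d_+$ enters these two with opposite effects, and discarding it (as (b) does) loses exactly the information that kills the middle range of $p$. This coupling is why the paper keeps $\overline{d}_-$, $\overline{d}_+$, and $x=p/n$ as variables, relaxes to the problem $(P)$ with constraints $(1-x)\overline{d}_-+x\overline{d}_+=d$ and $(1-x)\overline{d}_-\ge(\overline{d}_+-xn)x$, and proves in Lemma \ref{lemma1} that the minimum of $\overline{d}_-$ subject to $\overline{d}_+\ge d_+$ equals the claimed bound; the minimization over $\overline{d}_+$ rests on the monotonicity $f'(z)\ge 0$ of $f(z)=z-\frac{z-z_0}{1-z+\sqrt{z^2-z_0}}$, a step that has no counterpart in your plan. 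To repair your argument along your own lines, you would need to add $dn<p(p-1)+2(n-p)d_-$ to your list and redo the case analysis (small $p$ eliminated by (c), the rest by this inequality together with the quadratic bound), or better, carry $\overline{d}_+$ as a variable and reproduce the paper's monotonicity argument.
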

Let $d_-$ denote the lower bound for $d_G(u)$ specified in Theorem \ref{theorem2}.
For $d_+\leq \sqrt{dn}$, 
Lemma \ref{lemma1} below indicates 
that nothing really non-trivial can be said about $d_-$,
more precisely, in this case, $d_-$ will be $0$.
Theorem \ref{theorem2} gives an estimate for $d_-$ 
that is best possible up to terms of smaller order;
one can construct suitable almost extremal graphs 
approximating the values in (\ref{eopt}) below.
Applying Theorem \ref{theorem2} to the complement of $G$
yields a symmetric result, where one first specifies the lower bound 
$d_-$ for $d_G(u)$ and then determines the upper bound $d_+$ accordingly,
that is, in this case, $d_+$ is considered as a function of $n$, $d$, and $d_-$.
For given $n$ and $d$ as in Theorem \ref{theorem2}, 
the length
$$\ell_{\min}(d_+)
=d_+-d_-
=\frac{(d_+-d)n}{n-d_++\sqrt{d_+^2-dn}}
=\left(\frac{\frac{d_+}{n}-\frac{d}{n}}{1-\frac{d_+}{n}+\sqrt{\left(\frac{d_+}{n}\right)^2-\frac{d}{n}}}\right)n$$
of the specified interval satisfies
$\min\left\{\ell_{\min}(d_+):
d_+\in \left(\sqrt{dn},n-1\right]\right\}\leq \frac{n}{2}$
for every $d\in (0,n-1)$.
Up to terms of smaller order, 
Theorem \ref{theorem1} corresponds to the choice
$\frac{d_+}{n}=\frac{n+d}{2n}=\frac{1}{2}+\frac{d}{2n}$,
in which case
$\ell_{\min}(d_+)=\frac{n}{2}$,
that is, up to terms of smaller order, Theorem \ref{theorem2}
implies Theorem \ref{theorem1}.
For $d\not=\frac{n}{2}$ and suitable choices of $d_+$, 
Theorem \ref{theorem2} 
guarantees a vertex degree within a smaller interval around $d$ 
than Theorem \ref{theorem1},
cf.~Figure \ref{plot}.

\begin{figure}[h]
{\footnotesize
\begin{tikzpicture}
\begin{axis}[xlabel=$\frac{d_+}{n}$,
domain = 0.5:1,domain y = 0:1,samples=200] 
 \addplot[mark=](x,{(x-0.25)/(1-x+sqrt(x^2-0.25))});
\end{axis}
\end{tikzpicture}
\begin{tikzpicture}
\begin{axis}[xlabel=$\frac{d_+}{n}$,
domain = 0.7072:1,domain y = 0:1,samples=100] 
 \addplot[mark=](x,{(x-0.5)/(1-x+sqrt(x^2-0.5))});
\end{axis}
\end{tikzpicture}
\begin{tikzpicture}
\begin{axis}[xlabel=$\frac{d_+}{n}$,
domain = 0.9:1,domain y = 0:1,samples=100] 
 \addplot[mark=](x,{(x-0.81)/(1-x+sqrt(x^2-0.81))});
\end{axis}
\end{tikzpicture}
}
\caption{The three plots show $\frac{\ell_{\min}(d_+)}{n}$
as a function of $\frac{d_+}{n}\in\left(\sqrt{\frac{d}{n}},1\right]$ for $\frac{d}{n}\in \{ 0.25,0.5,0.81\}$.}\label{plot}
\end{figure}
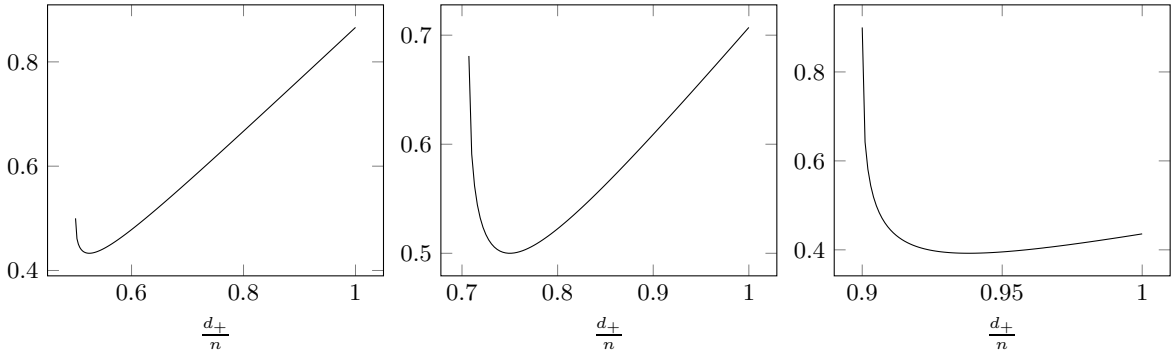
Applying our results repeatedly to a given graph,
each times removing a vertex of degree close to the current average degree,
yields several vertices whose degrees are guaranteed to lie in 
slowly changing intervals around the original average degree.

The proofs are given in the following section.

\section{Proofs}

We proceed to the proofs of our two results.

\begin{proof}[Proof of Theorem \ref{theorem1}.]
Let $G$, $n$, $m$, $d$, and $\overline{d}$ be as in the statement.
Since the statement is trivial for $d\in \{ 0,n-1\}$, we may assume that $0<d<n-1$.
Let 
\begin{align*}
V_-=\left\{ u\in V(G):d_G(u)\leq d-\frac{n-2}{2(n-1)}d\right\}
&&\mbox{ and }&&
V_+&=\left\{ u\in V(G):d_G(u)\geq d+\frac{n-2}{2(n-1)}\overline{d}\right\}.
\end{align*}
In view of the desired statements, 
we may assume that $V(G)$ is the disjoint union of $V_+$ and $V_-$.
Since $G$ has average degree $d$, 
the sets $V_+$ and $V_-$ are both non-empty, and, hence, 
we have $\frac{1}{n}\leq x\leq 1-\frac{1}{n}$ for $x=\frac{|V_+|}{n}$.
The degree sum $\sum\limits_{u\in V_+}d_G(u)$ of the vertices in $V_+$ 
is at most the sum of the following two terms:
\begin{itemize}
\item Twice the number of edges in $G$ between vertices in $V_+$.
This is at most $xn(xn-1)$
with equality if and only if $V_+$ is a clique.
\item The number of edges in $G$ between $V_+$ and $V_-$.
This is at most $\sum\limits_{u\in V_-}d_G(u)$
with equality if and only if $V_-$ is an independent set.
\end{itemize}
This implies
\begin{align}
dn &=\sum_{u\in V_+}d_G(u)+\sum_{u\in V_-}d_G(u)\nonumber\\
& \leq xn(xn-1)+2\sum_{u\in V_-}d_G(u)\label{e2}\\
&\leq  xn(xn-1)+2\underbrace{(1-x)n}_{=|V_-|}
\underbrace{\left(d-\frac{n-2}{2(n-1)}d\right)}_{\geq d_G(u)\,\,for\,\,u\in V_-}.\label{e3}
\end{align}
Applying exactly the same couting argument to $\overline{G}$ implies
\begin{align}
(n-1-d)n &=\sum_{u\in V_-}d_{\overline{G}}(u)+\sum_{u\in V_+}d_{\overline{G}}(u)\nonumber\\
&\leq (1-x)n((1-x)n-1)+2\sum_{u\in V_+}(n-1-d_G(u))\label{e4}\\
&\leq (1-x)n((1-x)n-1)+2xn
\underbrace{\left(n-1-d-\frac{n-2}{2(n-1)}\overline{d}\right)}_{\geq d_{\overline{G}}(u)\,\,for\,\,u\in V_+}.\label{e5}
\end{align}
By (\ref{e3}) and (\ref{e5}), we have
\begin{align}
f_1(x)&:=x(xn-1)+2(1-x)\left(d-\frac{n-2}{2(n-1)}d\right)-d
=\frac{(xn-1)(x(n-1)-d)}{n-1}\geq 0\mbox{ and }\label{e6}\\
f_2(x)&:=(1-x)((1-x)n-1)+2x\left(n-1-d-\frac{n-2}{2(n-1)}\overline{d}\right)-(n-1-d)\nonumber\\
&=\frac{(x(n-1)-d)((x-1)n+1)}{n-1}\geq 0.\label{e7}
\end{align}
Recall that $x\in \left[\frac{1}{n},1-\frac{1}{n}\right]$.
Since 
$f_1\left(\frac{1}{n}\right)=0$,
$f_2\left(1-\frac{1}{n}\right)=0$, 
$f_1\left(\frac{d}{n-1}\right)=f_2\left(\frac{d}{n-1}\right)=0$, and
$f_1(x)$ as well as $f_2(x)$ are both strictly convex, we have
$\min\{ f_1(x),f_2(x)\}<0$ for $x\in \left[\frac{1}{n},1-\frac{1}{n}\right]\setminus \left\{ \frac{1}{n},\frac{d}{n-1},1-\frac{1}{n}\right\}$.
By (\ref{e6}) and (\ref{e7}), 
this implies
\begin{align*}
x&\in \left\{ \frac{1}{n},\frac{d}{n-1},1-\frac{1}{n}\right\}
&\mbox{ and }&&\min\Big\{ f_1(x),f_2(x)\Big\}=0.
\end{align*}
In order to show the existence of a vertex $u$ in $G$ that satisfies (\ref{e1}), 
we suppose, for a contradiction, 
that no vertex in $V_+$ has degree $d+\frac{n-2}{2(n-1)}\overline{d}$ and
that no vertex in $V_-$ has degree $d-\frac{n-2}{2(n-1)}d$.
Since $V_+$ and $V_-$ are both non-empty, 
the two inequalities (\ref{e3}) and (\ref{e5}) become strict,
which implies the contradiction $\min\{ f_1(x),f_2(x)\}>0$.
Hence, some vertex $u$ in $G$ satisfies (\ref{e1}).

We proceed to the proof of the second part of the statement.
Note that the hypothesis that 
$G$ has no vertex $u$ with $d-\frac{n-2}{2(n-1)}d<d_G(u)<d+\frac{n-2}{2(n-1)}\overline{d}$
is equivalent to our assumption that 
$V(G)$ is the disjoint union of $V_+$ and $V_-$.
If $V_+$ is not a clique or $V_-$ is not an independent set, 
then (\ref{e2}) and (\ref{e4}) become strict inequalities, 
and we obtain the contradiction $\min\{ f_1(x),f_2(x)\}>0$.
Similarly, 
if some vertex in $V_+$ has a degree strictly larger than $d+\frac{n-2}{2(n-1)}\overline{d}$
and 
some vertex in $V_-$ has a degree strictly smaller than $d-\frac{n-2}{2(n-1)}d$,
then (\ref{e3}) and (\ref{e5}) become strict inequalities,
and again we obtain the contradiction $\min\{ f_1(x),f_2(x)\}>0$.
If $V_+$ is a clique,
$V_-$ is an independent set, 
all vertices in $V_-$ have degree exactly $d-\frac{n-2}{2(n-1)}d$, and
some vertex in $V_+$ has degree strictly larger than $d+\frac{n-2}{2(n-1)}\overline{d}$,
then 
\begin{align*}
dn & =\sum_{u\in V_+}d_G(u)+\sum_{u\in V_-}d_G(u)\\
& >\left(d+\frac{n-2}{2(n-1)}\overline{d}\right)xn+\left(d-\frac{n-2}{2(n-1)}d\right)(1-x)n\\
& = \frac{(n-2)n}{2}x+\frac{n^2d}{2(n-1)}
\end{align*}
implies the contradiction $x<\frac{d}{n-1}$.
Hence, by symmetry between $G$ and $\overline{G}$,
we may assume that 
$V_+$ is a clique,
$V_-$ is an independent set, 
all vertices in $V_-$ have degree exactly $d-\frac{n-2}{2(n-1)}d$, and
all vertices in $V_+$ have degree exactly $d+\frac{n-2}{2(n-1)}\overline{d}$.
As above, this implies $x=\frac{d}{n-1}$, and it follows easily
that every vertex in $V_+$ is adjacent to exactly half the vertices in $V_-$ 
and 
that every vertex in $V_-$ is adjacent to exactly half the vertices in $V_+$.
This completes the proof.
\end{proof}
The interval 
$$\left[d-\frac{n-2}{2(n-1)}d,d+\frac{n-2}{2(n-1)}\overline{d}\right]$$
from Theorem \ref{theorem1} 
always has the same length $\frac{n-2}{2}$
but it naturally shifts with changes of $d$.
Now, for a given value of $d_+$ from $[d,n-1]$,
we consider the largest $d_-$ from $[0,d]$ 
such that the interval $I=[d_-,d_+]$ around $d$ 
necessarily contains some vertex degree,
that is, we specify the upper end $d_+$ of $I$
and consider the lower end $d_-$ of $I$ as a function of $n$, $d$, and $d_+$.
Quite naturally, if $d_+$ is close enough to $d$, then $d_-$ will have to be $0$,
which leads to the obvious observation at the very beginning of this note.

Now, let $n$ and $m$ be integers with $0<m<{n\choose 2}$,
and let $d=\frac{2m}{n}$.

For $d_+\in (d,n-1)$, let 
$$d_-=d_-(n,d,d_+)$$ 
be the smallest possible value of $d_-\in [0,d]$ such that 
there is some graph $G$ of order $n$ and average degree $d$
whose vertex set $V(G)$ is partitioned into the two sets
\begin{align*}
V_-=\left\{ u\in V(G):d_G(u)\leq d_-\right\}
&&\mbox{ and }&&
V_+&=\left\{ u\in V(G):d_G(u)\geq d_+\right\}.
\end{align*}
Clearly, the choices of $d_-$ and $G$ imply that some vertex in $G$
has degree $d_-$, in particular, $d_-$ is an integer.

This definition implies that 
\begin{itemize}
\item every graph of order $n$ and average degree $d$
has a vertex of degree in $[d_-,d_+]$, 
while 
\item some such graph has no vertex of degree in $(d_-,d_+)$,
\end{itemize}
that is, $[d_-,d_+]$ is indeed a minimal interval with the desired property.
Our goal is to estimate $d_-=d_-(n,d,d_+)$.

Let $G$ be as above.

Similarly as in the proof of Theorem \ref{theorem1},
the condition $d_+>d$ implies that $V_-$ and $V_+$ are both non-empty.

For the average degrees
$\overline{d}_{\pm}=\frac{1}{|V_{\pm}|}\sum\limits_{u\in V_{\pm}}d_G(u)$
within $V_-$ and $V_+$, respectively,
we obtain
\begin{align}\label{ep1}
0\leq \overline{d}_-\leq d_-\leq d<d_+\leq \overline{d}_+\leq n-1.
\end{align}
For $x=\frac{|V_+|}{n}\in (0,1)$, we have
\begin{align}\label{ep2}
(1-x)n\overline{d}_-+xn\overline{d}_+=dn.
\end{align}
Since the number of edges between $V_-$ and $V_+$ is at least
$\left(\overline{d}_+-xn+1\right)xn$, we have the following
Erd\H{o}s-Gallai type inequality
\begin{align}\label{ep3}
(1-x)n\overline{d}_-\geq \left(\overline{d}_+-xn+1\right)xn.
\end{align}
It follows that 
\begin{align}\label{ep9}
d_-(n,d,d_+)\geq {\rm OPT}(P)
\end{align}
for the following optimization problem $(P)$:
\begin{align}
&& \min\,\, & d_-\nonumber\\
&& s.th.\,\, & (1-x)\overline{d}_-+x\overline{d}_+ =d\label{ep4}\\
(P)&&& (1-x)\overline{d}_-\geq \left(\overline{d}_+-xn\right)x\label{ep5}\\
&&& 0\leq \overline{d}_-\leq d_-\label{ep6}\\
&&& d_+\leq \overline{d}_+\leq n\label{ep7}\\
&&& 0\leq x\leq 1\label{ep8}\\
&&& d_-,\overline{d}_-,\overline{d}_+,x\in \mathbb{R}.\nonumber
\end{align}
Note that $(P)$ is obtained 
by relaxing some of the conditions in 
(\ref{ep1}), (\ref{ep2}), and (\ref{ep3})
as well as the integrality of $d_-$.
The values of $n$, $d$, and $d_+$ are considered fixed parameters for $(P)$.

\begin{lemma}\label{lemma1}
In the above setting,
$${\rm OPT}(P)=
\begin{cases}
0 & \mbox{, if $d_+\leq \sqrt{dn}$}\\
d_+-\frac{(d_+-d)n}{n-d_++\sqrt{d_+^2-dn}} & \mbox{, if $d_+>\sqrt{dn}$.}
\end{cases}
$$
\end{lemma}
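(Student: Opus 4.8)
The plan is to treat $(P)$ as a genuine continuous optimization problem and reduce it to a one-variable minimization. First, since $d_-$ enters only through $\overline{d}_-\le d_-$ in (\ref{ep6}) and is otherwise free, every optimal solution may be taken with $d_-=\overline{d}_-$; hence it suffices to minimize $\overline{d}_-$ subject to (\ref{ep4})--(\ref{ep8}), and the optimum is attained in the interior $x\in(0,1)$. Next I would eliminate $\overline{d}_-$ using the equality (\ref{ep4}), writing $\overline{d}_-=\frac{d-x\overline{d}_+}{1-x}$, and substitute this into (\ref{ep5}) to turn the Erd\H{o}s--Gallai constraint into the single inequality $2x\overline{d}_+\le d+x^2n$, i.e.\ $\overline{d}_+\le h(x):=\frac{d+x^2n}{2x}$.

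Because $\overline{d}_-=\frac{d-x\overline{d}_+}{1-x}$ is strictly decreasing in $\overline{d}_+$ for fixed $x$, for each admissible $x$ the minimum over the interval $\overline{d}_+\in[d_+,\min\{n,h(x)\}]$ allowed by (\ref{ep7}) is attained at the top endpoint $\overline{d}_+^{\max}(x)=\min\{n,h(x)\}$, and such $x$ is feasible exactly when $\overline{d}_+^{\max}(x)\ge d_+$. The function $h$ is convex with minimum value $\sqrt{dn}$ attained at $x=\sqrt{d/n}$; this is precisely the origin of the case distinction. In the easy case $d_+\le\sqrt{dn}$ I would exhibit the feasible point $x=\sqrt{d/n}$, $\overline{d}_+=\sqrt{dn}$, $\overline{d}_-=d_-=0$, which satisfies (\ref{ep4}) and (\ref{ep5}) with equality; together with the trivial bound $d_-\ge\overline{d}_-\ge 0$ this yields ${\rm OPT}(P)=0$.

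For the main case $d_+>\sqrt{dn}$ I would study the single-variable objective $\phi(x)=\frac{d-x\,\overline{d}_+^{\max}(x)}{1-x}$. Writing $x_0=1-\sqrt{1-d/n}$ (the point with $h(x_0)=n$), one has $\overline{d}_+^{\max}(x)=n$ for $x\le x_0$ and $\overline{d}_+^{\max}(x)=h(x)$ for $x>x_0$; the feasibility condition $\overline{d}_+^{\max}(x)\ge d_+$ then caps $x$ at $x_1=\frac{d_+-\sqrt{d_+^2-dn}}{n}$, the smaller root of $h(x)=d_+$, while $\overline{d}_-\ge 0$ forces $x<\sqrt{d/n}$. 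One checks $x_0<x_1<\sqrt{d/n}$, so the feasible range is $(0,x_1]$; on it $\phi$ is continuous at $x_0$ and strictly decreasing on both pieces, since its derivatives reduce to $\frac{d-n}{(1-x)^2}$ and $\frac{nx^2-2nx+d}{2(1-x)^2}$, both negative there. Hence the minimum sits at $x=x_1$, where $\overline{d}_+=h(x_1)=d_+$.

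Finally I would evaluate $\phi(x_1)=\frac{d-x_1d_+}{1-x_1}$ and simplify. The hardest part is not conceptual but this closing algebra: setting $s:=\sqrt{d_+^2-dn}$ and using $s^2=d_+^2-dn$ to collapse the expression to $\frac{s(d_+-s)}{n-d_++s}$, then verifying that this equals the claimed $d_+-\frac{(d_+-d)n}{n-d_++\sqrt{d_+^2-dn}}$. Achievability (${\rm OPT}(P)\le\phi(x_1)$) is immediate from the optimal point just described, while the matching lower bound follows because every feasible triple obeys $d_-\ge\overline{d}_-\ge\phi(x)\ge\phi(x_1)$. The one subtlety requiring care is excluding feasible $x\ge\sqrt{d/n}$ (where $h$ climbs back to $d_+$ at the larger root $x_2=\frac{d_++s}{n}$): I would rule these out by noting that maximizing $\overline{d}_+$ there forces $\overline{d}_-<0$, so no such $x$ is admissible and the analysis on $(0,x_1]$ is complete.
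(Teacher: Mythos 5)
Your proposal is correct, and it reaches the stated formula by a genuinely different execution than the paper's. The paper fixes an optimal solution of $(P)$ and argues \emph{at the optimum}: it shows $d_-=\overline{d}_->0$, uses a perturbation argument (decrease $\overline{d}_-$ and $d_-$ slightly, increase $x$) to force equality in (\ref{ep5}), solves the resulting quadratic for $x$ in terms of $\overline{d}_+$, and finishes by minimizing the one-variable function $f(z)$, $z=\overline{d}_+/n$, over $z\in\left[\frac{d_+}{n},1\right]$ via $f'(z)\ge 0$. You instead eliminate $\overline{d}_-$ through the equality (\ref{ep4}), convert (\ref{ep5}) into the cap $\overline{d}_+\le h(x)=\frac{d+x^2n}{2x}$, maximize $\overline{d}_+$ pointwise in $x$, and minimize the resulting one-variable function $\phi(x)$ over the feasible range $(0,x_1]$ with $x_1=\frac{1}{n}\left(d_+-\sqrt{d_+^2-dn}\right)$. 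The two parametrizations ($\overline{d}_+$ versus $x$) trace the same boundary curve, and both proofs hinge on the Erd\H{o}s--Gallai constraint being tight at the extremum; but your version buys something concrete: the chain $d_-\ge\overline{d}_-\ge\phi(x)\ge\phi(x_1)$ bounds \emph{every} feasible point directly, so you never need to presuppose that an optimum is attained (the paper assumes this implicitly, justified by compactness) nor run the perturbation step. The price is a more intricate piecewise analysis (the breakpoint $x_0$, continuity there, two derivative computations --- both of which check out, as does your closing algebra collapsing $\phi(x_1)$ to $\frac{s(d_+-s)}{n-d_++s}$ with $s=\sqrt{d_+^2-dn}$). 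One wording slip at the end: to exclude $x\ge\sqrt{d/n}$ you say that ``maximizing $\overline{d}_+$ there forces $\overline{d}_-<0$'', but maximizing $\overline{d}_+$ only makes $\overline{d}_-$ smaller, so as stated this excludes nothing; the correct statement --- which you in fact give earlier in the proposal --- is that already the \emph{smallest} admissible value $\overline{d}_+=d_+>\sqrt{dn}$ combined with $x\ge\sqrt{d/n}$ yields $x\overline{d}_+>d$ and hence $\overline{d}_-<0$. That is a phrase to fix, not a gap.
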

\begin{proof} 
Note that ${\rm OPT}(P)\geq 0$ by (\ref{ep6}).

If $d_+\leq \sqrt{dn}$, then 
$\left(d_-,\overline{d}_-,\overline{d}_+,x\right)=\left(0,0,\sqrt{dn},\sqrt{\frac{d}{n}}\right)$
is an optimal solution of $(P)$.

Now, let $d_+>\sqrt{dn}$.

Since $\left(d_-,\overline{d}_-,\overline{d}_+,x\right)$ equal to
\begin{align}\label{eopt}
\left(
d_+-\frac{(d_+-d)n}{n-d_++\sqrt{d_+^2-dn}},
d_+-\frac{(d_+-d)n}{n-d_++\sqrt{d_+^2-dn}},
d_+,
\frac{1}{n}\left(d_+-\sqrt{d_+^2-dn}\right)\right)
\end{align}
is a feasible solution of $(P)$,
we have 
$${\rm OPT}(P)\leq d_+-\frac{(d_+-d)n}{n-d_++\sqrt{d_+^2-dn}}<d.$$
In fact, it will turn out that (\ref{eopt}) is an optimal solution.

Let 
$\left(d_-,\overline{d}_-,\overline{d}_+,x\right)$ 
be an optimal solution of $(P)$.
In particular, $d_-<d$.
In view of the objective function, (\ref{ep6}) implies $d_-=\overline{d}_-$.
Since $\overline{d}_-<d<\overline{d}_+$, (\ref{ep4}) and (\ref{ep8}) imply $0<x<1$.
If $d_-=0$, then (\ref{ep5}) implies $\overline{d}_+\leq xn$, 
and, using (\ref{ep4}) and (\ref{ep7}), 
this implies the contradiction $d_+\leq \overline{d}_+\leq \sqrt{dn}$.
Hence, $d_->0$.
If $(\ref{ep5})$ would not be satisfied with equality, then 
decreasing $d_-$ and $\overline{d}_-$ both by some sufficiently small $\epsilon>0$,
and increasing $x$ by 
$\delta=\frac{(1-x)\epsilon}{\overline{d}_++\epsilon-\overline{d}_-}
\leq \frac{\epsilon}{d_+-d}$
leads to a better feasible solution, which is a contradiction.
Hence, (\ref{ep5}) is satisfied with equality.
Together with (\ref{ep4}), this implies
$d-x\overline{d}_+=(\overline{d}_+-xn)x$.
Solving this for $x$ yields
$$x\in \left\{ 
\frac{1}{n}\left(\overline{d}_++\sqrt{\overline{d}_+^2-dn}\right),
\frac{1}{n}\left(\overline{d}_+-\sqrt{\overline{d}_+^2-dn}\right)\right\}.$$
By (\ref{ep5}) satisfied with equality, we have $\overline{d}_+\geq xn$,
and, hence, $x=\frac{1}{n}\left(\overline{d}_+-\sqrt{\overline{d}_+^2-dn}\right)$.

Now, substituting this value for $x$, 
(\ref{ep4}) can be solved for $d_-$, which yields
\begin{align*}
d_- & =\overline{d}_+-\frac{\big(\overline{d}_+-d\big)n}{n-\overline{d}_++\sqrt{\overline{d}_+^2-dn}}\\
&=n\left(z-\frac{z-z_0}{1-z+\sqrt{z^2-z_0}}\right)
\mbox{ for $z=\frac{\overline{d}_+}{n}$ and $z_0=\frac{d}{n}$.}
\end{align*}
Since $d_+>\sqrt{dn}$, we have $z\geq \frac{d_+}{n}>\sqrt{z_0}$.

For the function,
$$f:\big(\sqrt{z_0},1\big]\to\mathbb{R}:z\mapsto z-\frac{z-z_0}{1-z+\sqrt{z^2-z_0}},$$
we obtain
$$f'(z)=\frac{(1-z)\big(2z^2-z_0-2z\sqrt{z^2-z_0}\big)}{\sqrt{z^2-z_0}\big(1-z+\sqrt{z^2-z_0}\big)^2}.$$
Since $f'(z)\geq 0$ for $z\in \big(\sqrt{z_0},1\big]$,
the smallest possible value of $d_-=nf(z)$ 
with $z\in \left[\frac{d_+}{n},1\right]$
is achieved for $z=\frac{d_+}{n}$,
which implies
$$d_-=d_+-\frac{(d_+-d)n}{n-d_++\sqrt{d_+^2-dn}}.$$
Altogether, it follows that 
$\left(d_-,\overline{d}_-,\overline{d}_+,x\right)$
is exactly as in (\ref{eopt}),
which completes the proof.
\end{proof}
Now, (\ref{ep9}) and Lemma \ref{lemma1} imply Theorem \ref{theorem2}.

\end{document}